\def\Th1{\varTheta}
\begin{document}

\newtheorem{theorem}{Theorem}
\newtheorem{lemma}[theorem]{Lemma}
\newtheorem{claim}[theorem]{Claim}
\newtheorem{cor}[theorem]{Corollary}
\newtheorem{conj}[theorem]{Conjecture}
\newtheorem{prop}[theorem]{Proposition}
\newtheorem{definition}[theorem]{Definition}
\newtheorem{question}[theorem]{Question}
\newtheorem{example}[theorem]{Example}
\newcommand{\hh}{{{\mathrm h}}}
\newtheorem{remark}[theorem]{Remark}

\numberwithin{equation}{section}
\numberwithin{theorem}{section}
\numberwithin{table}{section}
\numberwithin{figure}{section}

\def\sssum{\mathop{\sum\!\sum\!\sum}}
\def\ssum{\mathop{\sum\ldots \sum}}
\def\iint{\mathop{\int\ldots \int}}

\newcommand{\diam}{\operatorname{diam}}

\def\squareforqed{\hbox{\rlap{$\sqcap$}$\sqcup$}}
\def\qed{\ifmmode\squareforqed\else{\unskip\nobreak\hfil
\penalty50\hskip1em \nobreak\hfil\squareforqed
\parfillskip=0pt\finalhyphendemerits=0\endgraf}\fi}

\newfont{\teneufm}{eufm10}
\newfont{\seveneufm}{eufm7}
\newfont{\fiveeufm}{eufm5}
%
%
\newfam\eufmfam
     \textfont\eufmfam=\teneufm
\scriptfont\eufmfam=\seveneufm
     \scriptscriptfont\eufmfam=\fiveeufm
%
%
\def\frak#1{{\fam\eufmfam\relax#1}}

\newcommand{\bflambda}{{\boldsymbol{\lambda}}}
\newcommand{\bfmu}{{\boldsymbol{\mu}}}
\newcommand{\bfxi}{{\boldsymbol{\eta}}}
\newcommand{\bfrho}{{\boldsymbol{\rho}}}

\def\eps{\varepsilon}

\def\fK{\mathfrak K}
\def\fT{\mathfrak{T}}
\def\fL{\mathfrak L}
\def\fR{\mathfrak R}

\def\fA{{\mathfrak A}}
\def\fB{{\mathfrak B}}
\def\fC{{\mathfrak C}}
\def\fM{{\mathfrak M}}
\def\fS{{\mathfrak  S}}
\def\fU{{\mathfrak U}}
\def\fW{{\mathfrak W}}

\def\T {\mathsf {T}}
\def\Tor{\mathsf{T}_d}
\def\Tore{\widetilde{\mathrm{T}}_{d} }

\def\sM {\mathsf {M}}

\def\ss{\mathsf {s}}

\def\Kmnd{\cK_d(m,n)}
\def\Kmnp{\cK_p(m,n)}
\def\Kmnq{\cK_q(m,n)}

\def \balpha{\bm{\alpha}}
\def \bbeta{\bm{\beta}}
\def \bgamma{\bm{\gamma}}
\def \bdelta{\bm{\delta}}
\def \bzeta{\bm{\zeta}}
\def \blambda{\bm{\lambda}}
\def \bchi{\bm{\chi}}
\def \bphi{\bm{\varphi}}
\def \bpsi{\bm{\psi}}
\def \bnu{\bm{\nu}}
\def \bomega{\bm{\omega}}

\def \bell{\bm{\ell}}

\def\eqref#1{(\ref{#1})}

\def\vec#1{\mathbf{#1}}

\newcommand{\abs}[1]{\left| #1 \right|}

\def\Zq{\mathbb{Z}_q}
\def\Zqx{\mathbb{Z}_q^*}
\def\Zd{\mathbb{Z}_d}
\def\Zdx{\mathbb{Z}_d^*}
\def\Zf{\mathbb{Z}_f}
\def\Zfx{\mathbb{Z}_f^*}
\def\Zp{\mathbb{Z}_p}
\def\Zpx{\mathbb{Z}_p^*}
\def\cM{\mathcal M}
\def\cE{\mathcal E}
\def\cH{\mathcal H}

\def\le{\leqslant}

\def\ge{\geqslant}

\def\sfB{\mathsf {B}}
\def\sfC{\mathsf {C}}
\def\L{\mathsf {L}}
\def\FF{\mathsf {F}}

\def\sE {\mathscr{E}}
\def\sS {\mathscr{S}}

\def\cA{{\mathcal A}}
\def\cB{{\mathcal B}}
\def\cC{{\mathcal C}}
\def\cD{{\mathcal D}}
\def\cE{{\mathcal E}}
\def\cF{{\mathcal F}}
\def\cG{{\mathcal G}}
\def\cH{{\mathcal H}}
\def\cI{{\mathcal I}}
\def\cJ{{\mathcal J}}
\def\cK{{\mathcal K}}
\def\cL{{\mathcal L}}
\def\cM{{\mathcal M}}
\def\cN{{\mathcal N}}
\def\cO{{\mathcal O}}
\def\cP{{\mathcal P}}
\def\cQ{{\mathcal Q}}
\def\cR{{\mathcal R}}
\def\cS{{\mathcal S}}
\def\cT{{\mathcal T}}
\def\cU{{\mathcal U}}
\def\cV{{\mathcal V}}
\def\cW{{\mathcal W}}
\def\cX{{\mathcal X}}
\def\cY{{\mathcal Y}}
\def\cZ{{\mathcal Z}}
\newcommand{\rmod}[1]{\: \mbox{mod} \: #1}

\def\cg{{\mathcal g}}

\def\vy{\mathbf y}
\def\vr{\mathbf r}
\def\vx{\mathbf x}
\def\va{\mathbf a}
\def\vb{\mathbf b}
\def\vc{\mathbf c}
\def\ve{\mathbf e}
\def\vh{\mathbf h}
\def\vk{\mathbf k}
\def\vm{\mathbf m}
\def\vz{\mathbf z}
\def\vu{\mathbf u}
\def\vv{\mathbf v}

\def\e{{\mathbf{\,e}}}
\def\ep{{\mathbf{\,e}}_p}
\def\eq{{\mathbf{\,e}}_q}

\def\Tr{{\mathrm{Tr}}}
\def\Nm{{\mathrm{Nm}}}

 \def\SS{{\mathbf{S}}}

\def\lcm{{\mathrm{lcm}}}

 \def\0{{\mathbf{0}}}

\def\({\left(}
\def\){\right)}
\def\l|{\left|}
\def\r|{\right|}
\def\fl#1{\left\lfloor#1\right\rfloor}
\def\rf#1{\left\lceil#1\right\rceil}
\def\fl#1{\left\lfloor#1\right\rfloor}
\def\ni#1{\left\lfloor#1\right\rceil}
\def\sumstar#1{\mathop{\sum\vphantom|^{\!\!*}\,}_{#1}}

\def\mand{\qquad \mbox{and} \qquad}

\def\tblue#1{\begin{color}{blue}{{#1}}\end{color}}




\hyphenation{re-pub-lished}

\mathsurround=1pt

\def\bfdefault{b}

\def \F{{\mathbb F}}
\def \K{{\mathbb K}}
\def \N{{\mathbb N}}
\def \Z{{\mathbb Z}}
\def \P{{\mathbb P}}
\def \Q{{\mathbb Q}}
\def \R{{\mathbb R}}
\def \C{{\mathbb C}}
\def\Fp{\F_p}
\def \fp{\Fp^*}

 \def \xbar{\overline x}

\title[Monochromatic Solutions Using At Least Three Colors]
{On Monochromatic Solutions of Linear Equations Using At Least Three Colors}

\author [L. P. Wijaya]{Laurence P. Wijaya}
\address{Department of Mathematics, University of Kentucky, 715 Patterson Office Tower, Lexington, KY 40506, USA}
\email{laurence.wijaya@uky.edu}

\begin{abstract}  We study the number of monochromatic solutions to linear equation in $\{1,\dots,n\}$ when we color the set by at least three colors. We consider the $r$-commonness for $r\geq 3$ of linear equation with odd number of terms, and we also prove that any $2$-uncommon equation is $r$-uncommon for any $r\geq 3$.
 \end{abstract}

\keywords{Additive Combinatorics, Commonness of Linear Equation, Ramsey Theory}
\subjclass{05D10, 05E16, 11B75}

\maketitle


\section{Introduction}
\subsection{Background and Motivation}
In 1996, Graham, R\"{o}dl, and Ruci\'{n}ski \cite{GRR} asked the number of monochromatic solutions of Schur's equation $x+y=z$ with $(x,y,z)\in [n]^3$, where we do $2$ coloring of $[n]:=\{1,\dots,n\}$. Robertson and Zeilberger \cite{RZ} showed that the minimum number of monochromatic Schur triples in a $2$-coloring of $[n]$ is asymptotically $n^2/11+O(n)$. This is less than $(1/8+o(1))n^2$, which is the expected number of monochromatic solutions if we do uniformly random coloring.

One can ask the similar question for more general linear equation
\begin{align*}
    a_1x_1+\dots+a_kx_k=0,\quad a_1,\dots,a_k\in \Z\backslash\{0\}.
\end{align*}
This problem has been studied for several linear equations, including generalized Schur triples, $K$-term arithmetic progressions, and constellations. In fact we are still unable to know which linear equations having uniformly random $2$-coloring asymptotically minimize the number of monochromatic solutions. 

We define $k$-term linear equation is \textbf{$2$-common over the integers} if any $2$-coloring of $[n]$ has at least as many monochromatic solutions asymptotically (as $n\rightarrow \infty$) as a uniformly random coloring. Otherwise, we say the equation is \textbf{$2$-uncommon.} More generally if we change $2$ to any positive integer $r$ greater than $1$, we can define linear equations is $r$-common over the integers in similar fashion. 

Recently, Costello and Elvin \cite{CE} showed that all $3$-term equations are $2$-uncommon over the integers. In the same paper, they conjectured that an equation is $2$-common over the integers if and only if the number of terms is even and has canceling partition. We say the linear equation 
\begin{align}
\label{eq:terms}
a_1x_1+\dots+a_kx_k=0 
\end{align}
has \textbf{canceling partition} if we can partition the coefficients into pairs $\{a_i,a_j\}$ such that $a_i+a_j=0$. Clearly if canceling partition exists then $k$ must be even. 

More recently, Dong, Mani, Pham, and Tidor \cite{DMPT} showed that the conjecture is false by showing the linear equation $x_1+2x_2-x_3-2x_4=0$ is uncommon over the integers. The case when the number of terms in equation~\eqref{eq:terms} is odd $k>3$ still unknown. 

While commonness over the integers is still a mystery, Versteegen \cite{Ver} proved that an equation is $2$-uncommon over finite abelian group $A$, with order of $A$ is coprime to any coefficient of the equation if and only if $k$ is even and has no canceling partition. This generalizes the result from \cite{FPZ} who proved the same result over finite fields. See also \cite{SW} for introduction to the topic.

We also note that another motivation to consider $r$-commonness of a linear equation is because Sidorenko property of an equation, which is introduced first in \cite{SW}. The notion is inspired from Sidorenko's conjecture on graph \cite{Sid}, which is still a major open problem in extremal graph theory. We recall the definition here. Given a linear equation $L : a_1x_1+\dots+a_kx_k$ over a finite abelian group $G$, if $\mathcal{C}(L)$ denotes the number of solutions of $L=0$ in $G^k$, we call $L$ is \textbf{Sidorenko in $G$} if for every $A\subseteq G$ we have
\[
t_L(1_A)\geq \left( \frac{|A|}{|G|} \right)^k
\]
where $1_A$ is indicator function of $A$ and
\[
t_L(1_A):=\frac{1}{|\mathcal{C}(L)|} \sum_{v\in \mathcal{C}(L)}\prod_{i=1}^k 1_A(v_i).
\]
Clearly if an equation is Sidorenko over $G$, then it is $r$-common for every $r\geq 2$. The results for finite fields is known in \cite{FPZ} and for general finite abelian groups in \cite{Ver}. One can think the notion of $r$-commonness for $r\geq 3$ to be the case between Sidorenko and $2$-common, which is referred only as common in existing literatures.

\subsection{Main Results}
We consider now the phenomenon of $r$-commonness for linear equation with $r\geq 3$. We first consider $r$-commonness over the integers of $k$-term linear equations where $k>1$ is odd positive integer, for every $r>2$. The problem is easier compared to $2$-commonness. In fact, they are all $r$-uncommon over the integers as a corollary of the following theorem.
\begin{theorem}
    \label{thm:r uncommon odd}
    Let $G$ be arbitrary nontrivial finite abelian group and $ E:a_1x_1+\dots+a_{2m+1}x_{2m+1}=0$ be an arbitrary linear equation such that $a_i\in \Z\backslash\{0\}$ for each $i=1,\dots,2m+1, m\geq 1$ with $|G|$ is coprime with any $a_i,i=1,\dots,2m+1$. Then the equation $E$ is $r$-uncommon over $G$ for every $r\geq 3$.
\end{theorem}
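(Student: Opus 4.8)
The plan is to move everything to Fourier analysis on $G$. For an $r$-colouring of a finite abelian group $H$ of order coprime to all $a_i$, with colour-class indicators $f_1,\dots,f_r$ of densities $\rho_j$, one checks (using that each $a_i$ is a unit mod $|H|$) that the number of monochromatic solutions of $E$ equals $|H|^{2m}\bigl(\sum_j\rho_j^{2m+1}+\sum_j\mathcal{E}_j\bigr)$, where $\mathcal{E}_j:=\sum_{\chi\neq\mathbf 1}\prod_{i=1}^{2m+1}\widehat{f_j}(a_i\chi)$, whereas a uniformly random $r$-colouring has expected count $|H|^{2m}\bigl(r^{-2m}+\kappa_H\bigr)$ with $\kappa_H>0$ coming from solutions with repeated coordinates. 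Since $\sum_j\rho_j^{2m+1}\ge r^{-2m}$ always, to prove $r$-uncommonness over $G$ it suffices to exhibit a balanced $r$-colouring with $\sum_j\mathcal{E}_j\le 0$ (for $G$ of bounded order one may instead exploit that $\kappa_G$ is then bounded below by an absolute constant). I would also record the pullback principle: a surjection $G\twoheadrightarrow Q$ carries a colouring $c_0$ of $Q$ to one $c$ of $G$ with $\rho_j(c)=\rho_j(c_0)$ and $\mathcal{E}_j(c)=\mathcal{E}_j^Q(c_0)$ (the characters of $G$ trivial on the kernel being exactly $\widehat Q$), so it is enough to colour well some quotient of $G$.

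The workhorse for the bulk of the cases is a translation-averaging argument. Put $s:=a_1+\dots+a_{2m+1}$. If $\gcd(s,|G|)=1$, then translating a colouring $c$ by $t\in G$ replaces $E$ by the shifted equation $\sum a_ix_i=st$, and as $t$ ranges over $G$ the right side ranges over all of $G$; hence the average over the $|G|$ translates of $c$ of the monochromatic count equals $\tfrac1{|G|}\sum_j|A_j|^{2m+1}$, which for (near\nobreakdash-)balanced $c$ is $|G|^{2m}r^{-2m}+O(|G|^{2m-2}r)$ and is strictly below the random expectation once $|G|$ is large enough relative to $r$; so some translate beats random. Via the pullback principle this disposes of every $G$ having a prime factor $q\nmid s$ (apply the averaging over $\mathbb Z_q$), and the remaining small $G$ are checked by hand. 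In particular every $2$-group is covered, since when $|G|$ is a power of $2$ the $a_i$ are forced to be odd and then $s$, a sum of an odd number of odd integers, is odd.

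The residual case is that $G$ is a $p$-group with $p\mid s$ (so $p$ is odd and $p\nmid a_i$). Reducing via pullback to a small quotient $\mathbb Z_{p^\ell}$: if $r\ge p^\ell$ the identity colouring already works, since its only monochromatic solutions are the $\#\{t:st=0\}$ constant ones, and that is below the random expectation because non-constant solutions exist; if $r<p^\ell$ one must produce a genuinely asymmetric balanced $r$-colouring of $\mathbb Z_{p^\ell}$ whose Fourier profile one computes directly to get $\sum_j\mathcal{E}_j<0$ (the small instance $\mathbb Z_2^2$ with class sizes $1,1,2$, where $\sum_j\mathcal{E}_j=2\cdot 4^{-(2m+1)}-2^{-(2m+1)}<0$, indicates the flavour), with direct verification for the finitely many boundary sizes. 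The natural way to organise the $r<p^\ell$ construction is through the degeneration of $E$ modulo $p$: setting $y_i=x_i-x_{2m+1}$ turns $E$ into the even-term equation $a_1y_1+\dots+a_{2m}y_{2m}=0$ in the differences (with $x_{2m+1}$ free), for which Versteegen's analysis over finite abelian groups is available. Assembling the three cases yields $r$-uncommonness of $E$ over every nontrivial $G$ for all $r\ge 3$.

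I expect the main obstacle to be exactly this residual case $p\mid s$ over $\mathbb Z_p$ with $r<p$: there every symmetric colouring — intervals, arithmetic progressions, coset colourings — gives $\sum_j\mathcal{E}_j=0$ identically, and no singleton colour class can help because $(\sum a_i)g=0$ for all $g$, so one needs a balanced colouring with a provably strictly negative error term (or one below the $\Theta(1/p)$ slack $\kappa_{\mathbb Z_p}$), with its sign controlled uniformly in the $a_i$ and in $r$. Getting that sign under control, via the even-term reduction and its harmonic analysis, is the delicate heart of the proof; by contrast the Fourier reformulation and the translation trick are routine once set up.
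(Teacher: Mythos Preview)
Your proposal is incomplete by your own admission: you flag the residual case ($G$ a $p$-group with $p\mid s=\sum a_i$ and $r<p$) as ``the delicate heart of the proof'' but do not actually carry it out, only gesturing toward an even-term reduction and Versteegen's analysis. More to the point, the whole case split on $\gcd(s,|G|)$ is unnecessary: the paper gives a single uniform construction that works for every $G$ simultaneously and never looks at $s$.

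The paper works with fractional colourings and simply specifies real, constant nonzero Fourier coefficients. For $r=3$: set $\hat f_1(0)=\hat f_2(0)=\tfrac13$, and for every nonzero character $\chi$ put $\hat f_1(\chi)=-2c$, $\hat f_2(\chi)=c$ with $c>0$ small (of order $1/|G|$, so that $f_1,f_2,1-f_1-f_2$ all take values in $[0,1]$); then $\widehat{1-f_1-f_2}(\chi)=c$ as well. Since each $a_i$ is a unit in $G$, $a_i\chi\ne 0$ whenever $\chi\ne 0$, so the contribution of each nonzero $\chi$ to the deviation is
\[
(-2c)^{2m+1}+c^{2m+1}+c^{2m+1}=c^{2m+1}\bigl(2-2^{2m+1}\bigr)<0,
\]
giving $\sum_j\mathcal{E}_j<0$ outright. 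For $r>3$ the extra colours are the constant function $1/r$ (all nonzero Fourier coefficients zero), and the same inequality persists. Oddness enters only through the sign of $(-2c)^{2m+1}$; there is no residual case.

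A secondary issue: even in your ``easy'' case $\gcd(s,|G|)=1$, translation-averaging a balanced colouring gives only that some translate has $\sum_j\mathcal{E}_j\le 0$. You recover strict inequality by comparing against the random expectation with its positive repeated-coordinate correction $\kappa_G$, but the paper (consistently with the Sidorenko-type formulation it quotes) compares directly to the threshold $r^{1-k}$; under that threshold your averaging would still need an argument ruling out equality for every translate.
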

\begin{cor}
\label{cor:r uncommon odd}
    Let $E:a_1x_1+\dots+a_{2m+1}x_{2m+1}=0, a_i\in \Z\backslash\{0\}$ for each $i=1,\dots,2m+1$ and $m\geq 1$ be a $2m+1$-linear equation. Then $E$ is $r$-uncommon over the integers for every $r\geq 3$.
\end{cor}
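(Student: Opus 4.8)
The plan is to deduce the corollary from Theorem~\ref{thm:r uncommon odd} via the standard transference from a cyclic group to an interval, taking the group to be $\Z_p$ for a prime $p$ coprime to all of the coefficients. Write $k=2m+1$. One should first tacitly exclude the degenerate family in which all $a_i$ have the same sign: there $E$ has no solution in $[n]^k$, every colouring and the random colouring have $0$ monochromatic solutions, and $E$ is trivially $r$-common, so the question is only meaningful when $E$ has both a positive and a negative coefficient. Assuming this, the number of solutions of $E$ in $[n]^k$ is $S(n)=c_En^{k-1}+O(n^{k-2})$, where $c_E>0$ is the suitably normalised $(k-1)$-volume of $\{v\in[0,1]^k : a_1v_1+\dots+a_kv_k=0\}$, and a uniformly random $r$-colouring of $[n]$ has expected monochromatic count $r^{1-k}S(n)$.

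Now fix a prime $p\nmid a_1\cdots a_k$ (there are infinitely many, and we may take $p$ as large as we like). Then $\Z_p$ is a nontrivial finite abelian group with $|\Z_p|$ coprime to every $a_i$, so for each $r\ge 3$ Theorem~\ref{thm:r uncommon odd} furnishes an $r$-colouring $\chi_0\colon\Z_p\to\{1,\dots,r\}$ whose number $M_0$ of monochromatic solutions of $E$ over $\Z_p$ is strictly smaller than the random value $r^{1-k}p^{k-1}$. Because $\Z_p$ is a fixed finite set, this strict inequality gives an absolute $\eta>0$ with $M_0\le(r^{1-k}-\eta)p^{k-1}$.

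The next step is to lift $\chi_0$ to the $p$-periodic colouring $\chi(x)=\chi_0(x\bmod p)$ of $[n]$ and count its monochromatic solutions. Writing $x_i=b_i+pu_i$ with $b_i\in\{1,\dots,p\}$ and $0\le u_i\le(n-b_i)/p$, the equation $a_1x_1+\dots+a_kx_k=0$ with $x_i\equiv b_i\pmod p$ is equivalent to $a_1u_1+\dots+a_ku_k=-p^{-1}(a_1b_1+\dots+a_kb_k)$, whose right-hand side is an integer of absolute value at most $|a_1|+\dots+|a_k|$ exactly when $a_1b_1+\dots+a_kb_k\equiv0\pmod p$. A routine lattice-point estimate then shows that each such residue vector $(b_1,\dots,b_k)$ admits $(c_E+o(1))(n/p)^{k-1}$ lifts to solutions in $[n]^k$, with the $o(1)$ uniform in the $b_i$ since the target above is $O(1)$ and hence displaces the counting hyperplane by only $O(1)$. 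Summing over all admissible residue vectors (there are $p^{k-1}$ of them) recovers $S(n)=(c_E+o(1))n^{k-1}$, and summing over the monochromatic ones (there are $M_0$ of them) gives
\[
M(\chi)=(c_E+o(1))(n/p)^{k-1}M_0\le(c_E+o(1))(r^{1-k}-\eta)n^{k-1},
\]
which falls short of the random count $(r^{1-k}+o(1))c_En^{k-1}$ by a fixed positive multiple of $n^{k-1}$. Hence $E$ is $r$-uncommon over the integers, and as $r\ge 3$ was arbitrary the corollary follows.

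I expect the only point that genuinely needs care to be the uniformity in the lattice-point count: checking that the number of lifts of a residue vector depends on $(b_1,\dots,b_k)$ only through lower-order terms, so that the density of monochromatic solutions of $\chi$ over $[n]$ converges precisely to that of $\chi_0$ over $\Z_p$. This is routine but should be spelled out; all the substance of the corollary sits in Theorem~\ref{thm:r uncommon odd}.
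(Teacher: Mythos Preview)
Your proof is correct and follows essentially the same route as the paper: apply Theorem~\ref{thm:r uncommon odd} to obtain an $r$-uncommon colouring of $\Z/p\Z$ for a suitable prime $p$, then transfer it to $[n]$ via the periodic lift. The only difference is that the paper does not carry out the transference argument explicitly but instead quotes it as Lemma~\ref{lem: CostElv} (which is \cite[Lemma~2.1]{CE}), namely $\limsup_{n\to\infty}\mu_E([n])\le \mu_E(\Z/\ell\Z)$; your lattice-point count is precisely a proof of this lemma in the case $\ell=p$. Your additional remark that the corollary tacitly requires the coefficients not all to share a sign (else there are no solutions in $[n]^k$ and the equation is vacuously $r$-common) is a valid observation that the paper does not address.
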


Next we consider when the number of terms in the equation is even. As mentioned, from \cite{Ver}, it is known that every equation that has no canceling partition is $2$-uncommon over abelian group $A$ provided the order of $A$ is coprime to every $a_i$. In particular, by Lemma \ref{lem: CostElv}, we have such equation is $2$-uncommon over the integers. We also already mentioned that $x_1+2x_2-x_3-2x_4=0$ is $2$-uncommon over the integers. Now we also have these equations are $r$-uncommon by using different method of proof with Corollary \ref{cor:r uncommon odd}.
\begin{theorem}
    \label{thm:r uncommon even}
    Let $E : a_1x_1+\dots+a_kx_k=0,a_i\in \Z\backslash\{0\},k\geq 3$ for each $i=1,\dots,k$ be $2$-uncommon equation over the integers. Then the equation is also $r$-uncommon over the integers for any $r\geq 3$.
\end{theorem}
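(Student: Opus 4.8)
The plan is to manufacture a good $r$-colouring out of a good $2$-colouring. Since $E$ is $2$-uncommon over the integers there is a $\delta>0$ such that for infinitely many $n$ there is a $2$-colouring $\chi=\chi_n$ of $[n]$, with colour classes $C_0,C_1$, whose number of monochromatic solutions is at most $(2^{1-k}-\delta)S(n)$; here $S(n)$ denotes the number of solutions of $E$ in $[n]^k$, and $S(n)\asymp n^{k-1}$. Write $N_j$ for the number of solutions lying entirely inside $C_j$, so $N_0+N_1\le(2^{1-k}-\delta)S(n)$. First I would build an $r$-colouring $\psi$ by \emph{refining} $\chi$: fix integers $t_0,t_1\ge1$ with $t_0+t_1=r$, and split $C_0$ into $t_0$ parts and $C_1$ into $t_1$ parts by assigning to each element an independent uniformly random part of its $\chi$-class. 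Since $\psi$ refines $\chi$, no solution using both $\chi$-colours can be $\psi$-monochromatic, so (up to an $O(n^{k-2})$ contribution from solutions with repeated coordinates) the expected number of $\psi$-monochromatic solutions is $t_0^{1-k}N_0+t_1^{1-k}N_1$, whence some deterministic realisation of $\psi$ attains at most that value.

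Next I would choose the split. Over the reals, $t_0^{1-k}N_0+t_1^{1-k}N_1$ subject to $t_0+t_1=r$ is minimised at $t_j\propto N_j^{1/k}$, with minimum $r^{1-k}\bigl(N_0^{1/k}+N_1^{1/k}\bigr)^k$. By convexity of $x\mapsto x^k$ (a power-mean inequality) $\bigl(N_0^{1/k}+N_1^{1/k}\bigr)^k\le 2^{k-1}(N_0+N_1)$, so this minimum is at most $r^{1-k}2^{k-1}(N_0+N_1)\le r^{1-k}(1-2^{k-1}\delta)S(n)$, which is strictly below the $r^{1-k}S(n)$ attained by a uniformly random $r$-colouring. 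Thus, with the optimal real split, $\psi$ beats the random $r$-colouring by a fixed positive proportion, and letting $n\to\infty$ along the subsequence yields that $E$ is $r$-uncommon. So the argument is in hand save for one point.

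That point — and where I expect the real difficulty — is that $t_0,t_1$ must be positive integers while the optimal split is real. For large $r$ the nearest integer split loses only a lower-order amount, and when one colour class can be taken solution-free (possible for some equations) one class may be left unsplit ($t_0=1$) and the bound still closes; but for small $r$, most sharply $r=3$ where the only options are $(1,2)$ and $(2,1)$, an integer refinement of a \emph{barely} $2$-uncommon $\chi$ need not fall below $r^{1-k}S(n)$, and concatenating colourings on subintervals (which is dilation-invariant and free across disjoint palettes) does not by itself suffice either. Handling these small values of $r$ therefore seems to need an extra ingredient — exploiting the order/interval structure of $[n]$ beyond what a single refinement uses, or passing to an auxiliary limiting or finite-group formulation and transferring back — and this is the step I expect to be the crux of the argument.
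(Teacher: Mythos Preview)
Your worry about the integer split at $r=3$ is well founded and is a real obstruction, not a technicality. With a nearly balanced $2$-uncommon colouring, say $N_0=N_1\approx 2^{-k}S(n)$, and $k=3$, the only integer refinements $(t_0,t_1)\in\{(1,2),(2,1)\}$ give
\[
N_0+2^{1-k}N_1\;\approx\;\frac{5}{32}\,S(n)\;>\;\frac{1}{9}\,S(n)=3^{1-k}S(n),
\]
so no refinement of the two colour classes can beat random $3$-colouring in this regime. The power-mean step is fine over the reals, but the integrality loss at $r=3$ swamps the $\delta$ you saved.

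The paper sidesteps this by \emph{not} refining. It argues by induction, showing $(r-1)$-uncommon $\Rightarrow$ $r$-uncommon, and the inductive step is: take a good $(r-1)$-colouring of $[n]$, pick a uniformly random subset $R\subseteq[n]$ of size $\lfloor n/r\rfloor$, and recolour all of $R$ with the new $r$-th colour, \emph{irrespective of the previous colours of its elements}. Because $R$ cuts across every old colour class, the new colouring is not a refinement of the old one, and there is no integer partition to optimise. For a solution with pairwise distinct coordinates, the probability it survives as monochromatic in one of the first $r-1$ colours is $((r-1)/r)^k$, while the probability it becomes monochromatic in colour $r$ is $(1/r)^k$; since the $(r-1)$-colouring had proportion strictly below $(r-1)^{1-k}$, the expected proportion after recolouring is strictly below
\[
\Bigl(\frac{r-1}{r}\Bigr)^{k}\cdot\frac{1}{(r-1)^{k-1}}\;+\;\Bigl(\frac{1}{r}\Bigr)^{k}\;=\;\frac{r-1}{r^{k}}+\frac{1}{r^{k}}\;=\;\frac{1}{r^{k-1}}.
\]
This identity is exact for all $r,k$, so the argument works uniformly, including at $r=3$. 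The ``extra ingredient'' you anticipated is thus simply to let the new colour steal uniformly from every old class rather than subdividing one of them.
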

\begin{cor}
    Every linear equation $a_1x_1+\dots+a_{2m}x_{2m}=0,m\geq 2$ that has no canceling partition is $r$-uncommon over the integers. The same is true for $x+2y-z-2w$.
\end{cor}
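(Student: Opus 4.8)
The corollary is immediate from Theorem~\ref{thm:r uncommon even} together with \cite{Ver} (equations with no canceling partition are $2$-uncommon over abelian groups coprime to the coefficients, hence over $\Z$ by Lemma~\ref{lem: CostElv}) and \cite{DMPT} ($x+2y-z-2w$ is $2$-uncommon over $\Z$), so the real task is Theorem~\ref{thm:r uncommon even}. An equation with an odd number of terms is already $r$-uncommon over $\Z$ unconditionally by Corollary~\ref{cor:r uncommon odd}, so I would reduce at once to the case where $E$ has an \emph{even} number of terms $k$. The plan is then to leave the interval setting entirely: using the relationship between the two settings (in the spirit of Lemma~\ref{lem: CostElv}), or directly by spreading a good $2$-colouring of $[n]$ onto $\Z/N\Z$ for $N$ a large prime and colouring the leftover residues uniformly at random — the new wrap-around solutions behave like random ones while the genuine solutions in $[n]$ keep their advantage — one obtains, for suitable large $N$ coprime to all $a_i$, a $2$-colouring of $\Z/N\Z$ with monochromatic density strictly below $2^{1-k}$.

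Over $\Z/N\Z$ with invertible coefficients the solution set of $E$ is highly symmetric: for any $g\colon\Z/N\Z\to\R$ and any $j<k$ of the coordinates, the marginal of the uniform measure on solutions onto those coordinates is again uniform, so for every $c\in\R$ one has the \emph{exact} identity $\mathbb{E}_{\mathrm{sol}}\big[\prod_{m=1}^k(c+g(x_m))\big]=(c+\mathbb{E}g)^k-(\mathbb{E}g)^k+\mathbb{E}_{\mathrm{sol}}\big[\prod_{m=1}^k g(x_m)\big]$. Writing $1_S=\gamma+g_0$ with $\gamma=|S|/N$ and $\mathbb{E}g_0=0$, and $1_{S^c}=(1-\gamma)-g_0$, and using that $k$ is even, the monochromatic density of the $2$-colouring collapses to $\gamma^k+(1-\gamma)^k+2\,\mathbb{E}_{\mathrm{sol}}\big[\prod_m g_0(x_m)\big]$. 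Since $\gamma^k+(1-\gamma)^k\ge 2^{1-k}$, the fact that this is $<2^{1-k}$ forces $\mathbb{E}_{\mathrm{sol}}\big[\prod_{m=1}^k g_0(x_m)\big]<0$. This mean-zero function $g_0$ with strictly negative $k$-fold ``solution correlation'' is the one object I carry over to every $r\ge 3$.

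For $r\ge 3$ I would then take the fractional $r$-colouring of $\Z/N\Z$ with class weights $f_1=\tfrac1r+tg_0$, $f_2=\tfrac1r-tg_0$, $f_3=\dots=f_r=\tfrac1r$, where $t>0$ is small enough that $f_1,f_2\in[0,1]$; these are nonnegative and sum to $1$. Every $f_i$ has mean $1/r$, so the identity above gives monochromatic density $\sum_{i}\mathbb{E}_{\mathrm{sol}}\big[\prod_m f_i(x_m)\big]=r\cdot r^{-k}+(t^k+(-t)^k)\,\mathbb{E}_{\mathrm{sol}}\big[\prod_m g_0(x_m)\big]=r^{1-k}+2t^k\,\mathbb{E}_{\mathrm{sol}}\big[\prod_m g_0(x_m)\big]<r^{1-k}$, again using $k$ even. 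Rounding to an honest $r$-colouring (colour each element independently by the weights and keep the best outcome; the coincident solutions cost only $O(N^{k-2})$) shows $E$ is $r$-uncommon over $\Z/N\Z$ for $N$ large, and Lemma~\ref{lem: CostElv} carries this back to $\Z$, completing the theorem and hence the corollary.

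The step I expect to be the real work is \emph{producing the witness $g_0$}, not the $r$-colouring: one must upgrade the bare hypothesis ``$2$-uncommon over $\Z$'' to ``$2$-uncommon witnessed by a mean-zero $g_0$ with $\mathbb{E}_{\mathrm{sol}}[\prod g_0]<0$'', which requires (i) moving to a group where the lower-order marginals of the solution measure are uniform, so that all error terms vanish and the monochromatic density is exactly $\gamma^k+(1-\gamma)^k+2\mathbb{E}[\prod g_0]$, and (ii) checking the $\Z\leftrightarrow\Z/N\Z$ transfer in both directions with only negligible loss. It is worth noting why one cannot stay combinatorial: refining a good $2$-colouring into $r$ classes and filling the extra colours randomly forces a factor $\lfloor r/2\rfloor^{1-k}$ that one cannot compensate when $r$ is odd and the $2$-colouring's advantage is tiny, so the algebraic route seems essential. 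Finally, the parity of $k$ is used twice and crucially — to isolate $\mathbb{E}[\prod g_0]$ by killing the odd-order terms of $1_{S^c}=(1-\gamma)-g_0$, and to stop the $\pm tg_0$ perturbation from cancelling — which is precisely why the nontrivial content of the theorem lives with the even-term equations.
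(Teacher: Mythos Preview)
Your algebraic route is clean once you have a mean-zero $g_0$ on a cyclic group with $\mathbb{E}_{\mathrm{sol}}\bigl[\prod_m g_0(x_m)\bigr]<0$, and for the no-cancelling-partition half of the corollary this witness comes for free from \cite{Ver}. But the transfer step you flag as ``the real work'' is not merely hard, it is \emph{false in general}: the equation $x+2y-z-2w=0$ has a cancelling partition, so by \cite{Ver} it is $2$-\emph{common} over every $\Z/N\Z$ with $N$ odd. No $2$-colouring of $\Z/N\Z$ has monochromatic density below $2^{1-k}$, hence no $g_0$ with negative solution correlation exists on any cyclic group, and your construction of the $r$-colouring never gets off the ground for this equation. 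The $2$-uncommonness of $x+2y-z-2w$ over $\Z$ is a genuinely integer phenomenon that is destroyed by passing to any finite group, so the plan of ``leaving the interval setting entirely'' cannot prove the full Theorem~\ref{thm:r uncommon even} or the second sentence of the corollary.

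This is also why your closing remark that ``one cannot stay combinatorial'' misses the mark. The paper's proof of Theorem~\ref{thm:r uncommon even} stays entirely in $[n]$ and is a two-line induction: given an $(r-1)$-colouring of $[n]$ with monochromatic proportion below $(r-1)^{1-k}$, pick a uniformly random set of $\lfloor n/r\rfloor$ elements and recolour them with colour $r$. Up to $O(1/n)$, a previously monochromatic solution with distinct coordinates survives with probability $((r-1)/r)^k$, and any solution becomes colour-$r$ monochromatic with probability $(1/r)^k$, so the expected new proportion is at most $(r-1)^{1-k}((r-1)/r)^k + (1/r)^k = r^{1-k}$, strictly so because the starting proportion was strictly below $(r-1)^{1-k}$. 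The obstruction you describe (a bad $\lfloor r/2\rfloor^{1-k}$ factor) arises only if one tries to jump from $2$ colours directly to $r$; inducting one colour at a time avoids it and needs no parity split, no group transfer, and no Fourier identity.
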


\section{Notation and Convention}
We cover several notations that we constantly use here. We denote $f=O(g)$ or $f\ll g$ or if there exists constant $C$ such that $|f|\leq Cg$. We denote $p$ for prime, and cyclic group of order $\ell$ by $\Z/\ell\Z$ for positive integer $\ell>1$. Let $f,g : G\rightarrow [0,1]$, which we associate with a probabilistic coloring via
\begin{align*}
    f(t)&=\mathbb{P}[t\text{ is the first color}]\\
    g(t)&=\mathbb{P}[t\text{ is the second color}].
\end{align*}
One may think that we use red, green, and blue as colors, and red is the first color, green is the second color, and blue is the third color. We define the \textbf{Fourier transform of} $f$, denoted by $\hat{f}$, by
\[
\hat{f}(\xi):=\frac{1}{|G|}\sum_{t\in G} f(t)\e(-\xi\cdot t)
\]
where $\e(x)=e^{2\pi ix}$ and $\xi$ is homomorphism from $\widehat{G}$ to $\R/\Z$ acting as $\xi : t\mapsto \xi\cdot t$. Here $\widehat{G}$ is the dual of $G$.
We define the Fourier transform of $g$, denoted by $\hat{g}$ similarly.

We can write the expected number of red solutions of $a_1x_1+\dots+a_{2m+1}x_{2m+1}$ over $G$ in terms of Fourier transforms:
\[
\mathbb{E}[\text{number of red solutions}]=|G|^{2m}\sum_{t\in G} \hat{f}(a_1t)\dots \hat{f}(a_{2m+1}t).
\]
Note that the above formula is valid only if at least one of $a_1,\dots,a_{2m+1}$ is coprime to $|G|$, which we always assume. The expected proportion of monochromatic solutions in $G$ is
\begin{align}
\begin{split}
\label{eq:sols}
&\mu_{a_1x_1+\dots+a_{2m+1}x_{2m+1}=0}(f,g)\\
&=\sum_{t\in G}\hat{f}(a_1t)\dots\hat{f}(a_{2m+1}t)
+\sum_{t\in G}\hat{g}(a_1t)\dots\hat{g}(a_{2m+1}t)\\
&\quad+\sum_{t\in G}(\widehat{1-f-g})(a_1t)\dots(\widehat{1-f-g})(a_{2m+1}t).
\end{split}
\end{align}
\section{Linear Equations That Are $r$-Uncommon}
We begin by proving Theorem \ref{thm:r uncommon odd}. Corollary \ref{cor:r uncommon odd} follows from Theorem \ref{thm:r uncommon odd} and the following lemma, which is \cite[Lemma 2.1]{CE}. For arbitrary set $S$ and arbitrary linear equation $E:a_1x_1+\dots+a_kx_k=0$, we denote the proportion of the minimum number of monochromatic solutions with the number of total solutions of $E$ in $S^k$ by $\mu_E(S)$.
\begin{lemma}
\label{lem: CostElv}
    Let $E : a_1x_1+\dots+a_{2m+1}x_{2m+1}=0$. Then we have
    \[
    \limsup_{n\rightarrow \infty} \mu_E([n])\leq \mu_E(\Z/\ell\Z)
    \]
    for any positive integer $\ell$.
\end{lemma}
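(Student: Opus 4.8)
We sketch a proof; the statement is the one used repeatedly below to transfer lower bounds from cyclic groups to intervals.

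The plan is to show that the infimum defining $\mu_E([n])$ is, up to $o(1)$ as $n\to\infty$, attained by the coloring of $[n]$ obtained from an optimal coloring of $\Z/\ell\Z$ by pulling back along reduction modulo $\ell$; taking $\limsup$ then gives the inequality. Throughout I assume the $a_i$ are not all of one sign, so that $E=0$ has $\asymp n^{k-1}$ solutions in $[n]^k$ (otherwise there are none for every $n$ and the statement is vacuous); to sidestep an inessential point I also assume $\gcd(a_1,\dots,a_k)=1$, noting that clearing a common factor leaves $\mu_E([n])$ unchanged, and that in the applications one only needs the bound for a single $\ell$, which may be taken coprime to all $a_i$ (for which clearing the factor also leaves $\mu_E(\Z/\ell\Z)$ unchanged).

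Fix $\ell$ and a coloring $\chi$ of $\Z/\ell\Z$ attaining the minimum in the definition of $\mu_E(\Z/\ell\Z)$ — this exists since a finite set has only finitely many colorings — and define the coloring $\chi_n$ of $[n]$ by $\chi_n(x):=\chi(x\bmod\ell)$. A solution $(x_1,\dots,x_k)\in[n]^k$ of $E=0$ is $\chi_n$-monochromatic exactly when its reduction $(x_1\bmod\ell,\dots,x_k\bmod\ell)$ is $\chi$-monochromatic, and this reduction automatically lies in $\cC_E(\Z/\ell\Z):=\{\mathbf c\in(\Z/\ell\Z)^k:a_1c_1+\dots+a_kc_k\equiv 0\pmod{\ell}\}$. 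Hence, if $N_E([n];\mathbf c)$ denotes the number of solutions of $E=0$ in $[n]^k$ with $x_i\equiv c_i\pmod{\ell}$ for all $i$, the number of $\chi_n$-monochromatic solutions of $E$ in $[n]^k$ is exactly $\sum_{\mathbf c}N_E([n];\mathbf c)$, summed over the $\chi$-monochromatic $\mathbf c\in\cC_E(\Z/\ell\Z)$.

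The crux is the uniform equidistribution estimate
\[
N_E([n];\mathbf c)=\frac{N_E([n])}{|\cC_E(\Z/\ell\Z)|}\bigl(1+o(1)\bigr)\qquad(n\to\infty),
\]
with error uniform over $\mathbf c\in\cC_E(\Z/\ell\Z)$. To obtain it, substitute $x_i=c_i+\ell y_i$ with $c_i\in\{0,\dots,\ell-1\}$: then $E=0$ becomes $a_1y_1+\dots+a_ky_k=-s$ where $s:=(a_1c_1+\dots+a_kc_k)/\ell$ is an integer bounded independently of $n$, and $x_i\in[n]$ becomes $0\le y_i\le(n-c_i)/\ell$. So $N_E([n];\mathbf c)$ counts lattice points in the slice of an axis-parallel box of side $\asymp n/\ell$ by the affine hyperplane $a_1y_1+\dots+a_ky_k=-s$; since $s$ and the $c_i$ stay bounded while the box grows, and the $a_i$ are of mixed sign (so this hyperplane meets the box in a set of full $(k-1)$-dimensional volume), a standard lattice-point asymptotic gives $N_E([n];\mathbf c)\sim V(n/\ell)^{k-1}$ for a constant $V=V(a_1,\dots,a_k)$ independent of $\mathbf c$; summing over $\mathbf c\in\cC_E(\Z/\ell\Z)$ recovers $N_E([n])\sim V|\cC_E(\Z/\ell\Z)|(n/\ell)^{k-1}$, and division gives the estimate. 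I expect this uniform lattice-point count — together with the (easy, under $\gcd(a_1,\dots,a_k)=1$) check that every $\mathbf c\in\cC_E(\Z/\ell\Z)$ actually admits such lifts — to be the only step needing real work; it can be done directly by a Davenport-type argument or quoted from the literature on solutions of linear equations in intervals.

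Granting the estimate, write $m$ for the number of $\chi$-monochromatic elements of $\cC_E(\Z/\ell\Z)$. Then the number of $\chi_n$-monochromatic solutions of $E$ in $[n]^k$ is $\sum N_E([n];\mathbf c)$ over the $m$ relevant $\mathbf c$, which by the estimate equals $N_E([n])\cdot\frac{m}{|\cC_E(\Z/\ell\Z)|}(1+o(1))$; and $\frac{m}{|\cC_E(\Z/\ell\Z)|}=\mu_E(\Z/\ell\Z)$ by optimality of $\chi$. Therefore the proportion of $\chi_n$-monochromatic solutions of $E$ in $[n]^k$ is $\mu_E(\Z/\ell\Z)+o(1)$, so $\mu_E([n])\le\mu_E(\Z/\ell\Z)+o(1)$, and letting $n\to\infty$ gives $\limsup_{n\to\infty}\mu_E([n])\le\mu_E(\Z/\ell\Z)$.
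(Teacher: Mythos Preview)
The paper does not supply its own proof of this lemma; it is simply quoted as \cite[Lemma~2.1]{CE}. Your argument is the standard one (and presumably the one in \cite{CE}) and is correct: pull back an optimal coloring of $\Z/\ell\Z$ to $[n]$ via reduction modulo $\ell$, and use that integer solutions of $E=0$ in $[n]^k$ equidistribute, up to a $1+o(1)$ factor, over the residue classes in $\cC_E(\Z/\ell\Z)$. You correctly isolate this equidistribution as the only substantive step and give the right outline---after the substitution $x_i=c_i+\ell y_i$ one is counting lattice points on bounded translates of a fixed hyperplane inside a box of side $\asymp n/\ell$, and the leading term is independent of the translate. Your caveat about the case $\gcd(a_1,\dots,a_k)>1$ with $\ell$ sharing a factor with that gcd is honestly flagged and immaterial here, since the paper only invokes the lemma with $\ell=|G|$ coprime to every $a_i$.
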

Now we proceed to the proof of Theorem \ref{thm:r uncommon odd}.
\begin{proof}[Proof of Theorem \ref{thm:r uncommon odd}]
Let $E:a_1x_1+\dots+a_{2m+1}x_{2m+1}=0, a_i\in \Z\backslash\{0\}$. We prove that the equation is $3$-uncommon first, and then generalize to any $r>3$. We need to find $f$ and $g$ such that \eqref{eq:sols} is less than $\frac{1}{3^{2m}}$. But we have
\begin{align*}
    &\mu_{a_1x_1+\dots+a_{2m+1}x_{2m+1}=0}(f,g)\\
    &=\frac{1}{3^{2m}}
    +\sum_{t\in G\backslash\{0\}}\hat{f}(a_1t)\dots\hat{f}(a_{2m+1}t)+\sum_{t\in G\backslash\{0\}}\hat{g}(a_1t)\dots\hat{g}(a_{2m+1}t)\\
&\quad+\sum_{t\in G\backslash\{0\}}(\widehat{1-f-g})(a_1t)\dots(\widehat{1-f-g})(a_{2m+1}t)\\
&=\frac{1}{3^{2m}}+\sum_{t\in G\backslash\{0\}}\hat{f}(a_1t)\dots\hat{f}(a_{2m+1}t)
+\sum_{t\in G\backslash\{0\}}\hat{g}(a_1t)\dots\hat{g}(a_{2m+1}t)\\
&\quad+\sum_{t\in G\backslash\{0\}}(-\hat{f}-\hat{g})(a_1t)\dots(-\hat{f}-\hat{g})(a_{2m+1}t).
\end{align*}
The last equality follows from the fact for $s\neq 0$, we have $(\widehat{1-f})(s)=-\hat{f}(s)$.

We assume without loss of generality $\hat{f}(0)=\hat{g}(0)=\frac13$, which is equivalent to requiring overall red, green, and blue appear with equal probability. To get the proportion to be less than what we expect from uniformly random coloring, it is enough to find $f$ and $g$ such that $\hat{f}(0)=\hat{g}(0)=\frac13$ and
\begin{align*}
&\sum_{t\in G\backslash\{0\}}\hat{f}(a_1t)\dots\hat{f}(a_{2m+1}t)+\sum_{t\in G\backslash\{0\}}\hat{g}(a_1t)\dots\hat{g}(a_{2m+1}t)\\
&+\sum_{t\in G\backslash\{0\}}(-\hat{f}-\hat{g})(a_1t)\dots(-\hat{f}-\hat{g})(a_{2m+1}t)<0.
\end{align*}
We call the quantity of the above sum as \textbf{deviation}.

By Fourier inversion, $f$ and $g$ are uniquely determined by their Fourier coefficients. First, we note that $f$ and $g$ are real-valued if and only if $\hat{f}$ and $\hat{g}$ are Hermitian, i.e., $\overline{\hat{f}(s)}=\hat{f}(-s),\overline{\hat{g}(s)}=\hat{g}(-s)$, so we need to make sure this condition holds for $\hat{f}$ and $\hat{g}$. Second, we need to make sure that the ranges of $f$ and $g$ are subset of $[0,1]$. To do this, we use the \textbf{Fourier inversion formula}
\[
f(u)=\sum_{\xi\in \widehat{G}} \hat{f}(\xi)\e(\xi\cdot u), u\in G
\]
where we again use $\e(x)=e^{2\pi i x}$.
By triangle inequality, we have
\begin{align}
\label{ineq:f}
|f(u)-1/3|\leq \sum_{t\in G\backslash\{0\}}|\hat{f}(t)|.
\end{align}
By the same calculation, we also have
\begin{align}
\label{ineq:g}
|g(u)-1/3|\leq \sum_{t\in G\backslash\{0\}}|\hat{g}(t)|.
\end{align}
With these observations, we now construct $f$ and $g$ explicitly as follows. 

Define $f$ by having 
\[
\hat{f}(s)=-\frac{2}{p^2},\quad s\in G\backslash\{0\}
\]
and $g$ by having
\[
\hat{g}(s)=\frac{1}{p^2},\quad s\in G\backslash\{0\}
\]
We can take prime $p$ such that $p>|a_i|$ and $\gcd(a_i,p)=1$ for each $i=1,\dots,2m+1$ with
\begin{align*}
    0\leq \frac13-\frac{2(p-1)}{p^2}=\frac13-\sum_{t\in G\backslash\{0\}}\hat{f}(t)&\leq f(u)\leq \frac13+\frac{2(p-1)}{p^2}=\frac13+\sum_{t\in G\backslash\{0\}}\hat{f}(t)\leq 1\\
    0\leq \frac13-\frac{(p-1)}{p^2}=\frac13-\sum_{t\in G\backslash\{0\}}\hat{g}(t)&\leq g(u)\leq \frac13+\frac{(p-1)}{p^2}=\frac13+\sum_{t\in G\backslash\{0\}}\hat{g}(t)\leq 1.
\end{align*}
for any $u\in G$. The above inequalities follow from the inequalities \eqref{ineq:f} and \eqref{ineq:g} and the values of $\hat{f}, \hat{g}$. These give us images of $f$ and $g$ lie in $[0,1]$. We also remark $f$ and $g$ are real since $\hat{f}$ and $\hat{g}$ are Hermitian since $\hat{f}$ and $\hat{g}$ are real and both only take one value except at zero.

Another thing we need is $0<f(u)+g(u)<1$ for any $u\in G$. This can be guaranteed by taking $p$ large so that
\[
0<\frac23-\frac{3(p-1)}{p^2}<\frac23+\frac{3(p-1)}{p^2}<1.
\]

By the Fourier coefficients of $f$ and $g$ again, we have the deviation is
\begin{align}
\label{eq:dev}
-\left( \frac{2}{p^2} \right)^{2m+1}+\left( \frac{1}{p^2} \right)^{2m+1}+\left( \frac{1}{p^2} \right)^{2m+1}<0.
\end{align}
Therefore, every linear equation with $2m+1$ terms is $3$-uncommon.

To generalize to other $r>3$, we need to have $r-1$ functions $f_1,\dots,f_{r-1}$ such that the deviation is negative. As in the case when $r=3$, for other $r>3$ we require $\hat{f}_j(0)=\frac{1}{r}$ for every $j\in \{1,\dots,r-1\}$. Now we assign the values of their Fourier coefficients as follows. For $j=1,2$, we have
\begin{align*}
    \hat{f}_1(s)=-\frac{2}{p^2},&\quad s\in G\backslash\{0\},\\
    \hat{f}_2(s)=\frac{1}{p^2},&\quad s\in G\backslash\{0\}.
\end{align*}
where $p$ large in terms of $a_1,\dots,a_{2m+1}$, and also in terms of $r$ so that $0\leq f_j(u)\leq 1,u\in G,j=1,2$.

Then we define $\hat{f}_j(s)=0$ for any $j=3,\dots,r-1$ and any $s\in G\backslash\{0\}$. Now clearly we have that $\hat{f}_j$ is Hermitian and $f_j(u)\in [0,1]$ for any $u\in G$ and any $j=1,\dots,r-1$.
By doing similar calculations as in the case when $r=3$, we also get the deviation is negative, and we can choose $p$ such that $0<f_1(u)+\dots+f_{r-1}(u)<1$ for any $u\in G$. Thus this concludes the proof of Theorem \ref{thm:r uncommon odd}.
\end{proof}
\begin{remark}
    Note that if we choose a sufficiently large $p$, although the deviation in \eqref{eq:dev} is negative, it is negligible and the quantity is far smaller compare to the size of the group $G$. It would be interesting to obtain such $r$-uncommon coloring explicitly.
\end{remark}
To prove Theorem \ref{thm:r uncommon even}, we use probabilistic method instead of using Fourier method.
\begin{proof}[Proof of Theorem \ref{thm:r uncommon even}]
     We claim that if an equation is $r-1$-uncommon then it is $r$-uncommon for any $r\geq 3$, and this clearly implies the statement of the theorem immediately.
    
    Note that the total number of solutions with $x_i=x_j$ for some $i\neq j$ is $O(n^{k-1})$, so if $n$ is large, this kind of solution is negligible in our counting. So we can focus on solution with pairwise distinct coordinates. 
    
    Now let $(x_1,\dots,x_k)$ be solution of our linear equation in $[n]^k$ with $x_i\neq x_j$ for $i\neq j$ and $n$ is sufficiently large integer such that the equation $E$ is $2$-uncommon over $[n]$. We consider $r-1$-coloring of $[n]$ such that the equation is $r-1$-uncommon over $[n]$. We choose $\lfloor \frac{n}{r} \rfloor$ elements of $[n]$ uniformly at random, then we color those elements by the $r$-th color. Then if $(x_1,\dots,x_k)$ is monochromatic with previous coloring and $x_i\neq x_j$ for $i\neq j$, the probability that it will be monochromatic with the same color is
    \[
    \left( \frac{r-1}{r} \right)^k.
    \]
    If $(x_1,\dots,x_k)$ is arbitrary solution, it becomes monochromatic with the $r$-th color with probability
    \[
    \left( \frac{1}{r} \right)^k.
    \]

    Therefore, since the equation is $r-1$-uncommon, the expected proportion number of monochromatic solutions with $r$ colors and the number of total solutions is less than
    \[
    \left( \frac{r-1}{r} \right)^k\frac{1}{(r-1)^{k-1}}+\left( \frac{1}{r} \right)^k=\left( \frac{1}{r} \right)^{k-1}.
    \]
    Hence, there is an $r$-coloring of $[n]$ such that the equation $E$ is $r$-uncommon.

    Since we start with equation that is $2$-uncommon, we get the result.
\end{proof}
\section{Open Problem}
Costello and Elvin \cite{CE} used the Fourier method to show that the equation of type
\begin{align}
\label{eq:even}
x_1+\dots+x_m=x_{m+1}+\dots+x_{2m}
\end{align}
is $2$-common over the integers for any $m\geq 2$.  We showed that $2$-uncommonness implies $r$-uncommonness for any $r\geq 3$, but we still do not know whether if an equation is $2$-common then it is also $r$-common. Even in the simplest case where $m=2$ in \eqref{eq:even} we are not able yet to determine whether it is $3$-common or not. 

This motivates the following question.
\begin{question}
    Is there a linear equation that is $r$-uncommon for some $r\geq 3$ but it is $s$-common for some $s<r$?
\end{question}

We are particularly interested in equation that has canceling partition that is $2$-common over the integers since they are still poorly understood.

\section*{Acknowledgements}
The author would like to thank Fernando Xuancheng Shao for helpful discussions and suggestions during preparation of this work. The author is also indebted to anonymous referee for helpful suggestions and corrections.

\end{document}